\newtheorem{thm}{Theorem}[section]
\newtheorem{corollary}[thm]{Corollary}
\newtheorem{prop}[thm]{Proposition}
\newtheorem{lemma}[thm]{Lemma}
\newtheorem{fact}[thm]{Fact}
\theoremstyle{definition}
\newtheorem{defn}[thm]{Definition}
\newtheorem{example}[thm]{Example}
\theoremstyle{remark}
\newtheorem{remark}[thm]{Remark}
\newcommand{\bt}{\begin{thm}}
\newcommand{\et}{\end{thm}}
\newcommand{\bp}{\begin{prop}}
\newcommand{\ep}{\end{prop}}
\newcommand{\bd}{\begin{defn}}
\newcommand{\ed}{\end{defn}}
\newcommand{\bl}{\begin{lemma}}
\newcommand{\el}{\end{lemma}}
\newcommand{\bfa}{\begin{fact}}
\newcommand{\efa}{\end{fact}}
\newcommand{\bc}{\begin{corollary}}
\newcommand{\ec}{\end{corollary}}
\newcommand{\bex}{\begin{example}}
\newcommand{\eex}{\end{example}}
\newcommand{\br}{\begin{remark}}
\newcommand{\er}{\end{remark}}
\newcommand{\ben}{\begin{enumerate}}
\newcommand{\een}{\end{enumerate}}
\newcommand{\C}{\mathbb{C}}
\newcommand{\PP}{\mathbb{P}}
\newcommand{\ds}{{\displaystyle}}
\newcommand{\coo}{{\mathcal O}}
\newcommand{\Ox}{\mathcal{O}_X}
\newcommand{\Z}{\mathbb{Z}}
\newcommand{\Proj}{\mathbb{P}}
\newcommand{\la}{\longrightarrow}
\DeclareMathOperator{\tens}{\otimes}
\newcommand{\sotto}[2]{#1_{#2}}
\newcommand{\rrr}{\rightarrow}
\newcommand{\ra}{\rightarrow}
\newcommand{\ideal}[1]{\sotto {{\mathcal I}}{#1}}
\newcommand{\exact}[3]
{0 \rrr #1 \rrr #2
\rrr #3 \rrr 0}
\begin{document}

\title[A New Curve Algebraically not Rationally Uniformized]{ A New Curve Algebraically but not Rationally Uniformized by Radicals.}
\author{Gian Pietro Pirola, Cecilia Rizzi, Enrico Schlesinger}

\thanks{Partially supported by:
1) MIUR PRIN 2007:
  \textup{Moduli, strutture geometriche e loro applicazioni};
2) Gnsaga.
The second named author was also partially supported by a post-doc scholarship of Poli\-tecni\-co di Milano.}

\subjclass[2000]{14H10,14H30,20B25}
\keywords{monodromy groups, Galois groups, projective curves}

\maketitle
        \begin{abstract}
We give a new example of a curve $C$ algebraically, but not rationally, uniformized by radicals.
This means that $C$ has no map onto $\PP^1$ with solvable Galois group, while there exists
a curve $C'$ that maps onto $C$ and has a finite morphism to $\PP^1$ with solvable Galois group.
We construct such a curve $C$ of genus $9$ in the second symmetric product of a general curve of genus $2$.
It is also an example of a genus $9$ curve that does not satisfy condition $S(4,2,9)$ of Abramovich and Harris.
        \end{abstract}
\section*{Introduction}
Every smooth projective complex curve $C$ arises as a branched covering of the projective line $\PP^1$, and
its function field is a finite extension of the rational field $\C(x)$. However, it is a difficult problem
to find a method for classifying all possible covering maps $C \ra \PP^1$. As explained by M. Fried in \cite{fried}, Zariski
tackled this problem for the {\em general} curve of genus $g$ (here and in the rest of this paper {\em general} means outside
a countable union of closed subvarieties).  In particular, in \cite{Z} Zariski proves what he regarded
as the analogue for curves of Abel's theorem on the nonsolvability by radicals of a general algebraic equation of degree
$\geq 5$:  the general curve $C$ of genus $g>6$ is not a covering of $\PP^1$ with  solvable Galois group.
Zariski then goes
on and speculates that it would still be possible, though unlikely, that there exist a finite covering $C' \ra C$ with
$C'$ representable as a solvable covering of $\PP^1$. This problem has no analogue in the Galois theory of algebraic equations,
because of the many different ways a curve can be viewed as a covering of $\PP^1$.

To fix the terminology, the Galois group of a branched covering $C \ra C_0$ of smooth curves is the Galois group of the Galois
closure of the finite field extension $\C (C) / \C (C_0)$; it coincides with the monodromy group of the
unramified covering obtained removing the branch divisor from $C_0$. We  say the covering is {\em solvable} if its Galois
group is solvable \cite{harris, PS1}.
A curve $C$ is {\em rationally uniformized by radicals} if there exists a solvable covering map $C \ra \PP^1$,
and  is {\em algebraically uniformized by radicals} is there exists a finite covering $C' \ra C$ with $C'$
rationally uniformized by radicals. Zariski's theorem then says that the general curve of genus $g>6$
is not  rationally uniformized by radicals, and his question is whether $C$ might still be
algebraically uniformized by radicals.  It is not obvious that the two notions are really distinct: the first
example of a curve that is algebraically, but not rationally, uniformized by radicals was given in
\cite{PS} building on work of Debarre and Fahlaoui \cite{DF}. In this paper we give a new example. We feel that it is
of utmost importance to investigate new examples to enhance our understanding of the difficult problem of
describing the possible representations of the general curve $C$ of genus $g$ as a covering of $\PP^1$. Let us recall that
in recent years there has been a lot of research concerning Galois groups of coverings
$C \ra B$ when the genus of $C$ is fixed: see for example \cite{gs} and its list of references. In particular, it is now known that in the moduli space of genus $g$ curves
the locus of curves that are rationally
uniformized by radicals is contained in a proper Zariski closed subset \cite[Theorem 1.6]{n2} and \cite[Theorem 2.4]{gn}.

Debarre and Fahlaoui were motivated by a related problem posed by Abramovich and Harris  \cite{ah}, who formulated
the statement:

\smallskip
\noindent
STATEMENT $S(d,h,g)$: Suppose $C' \ra C$ is a nonconstant map of smooth curves with $C$ of genus $g$.
If $C'$ admits a map of degree $d$ or less to a curve of genus $h$ or less, then so does $C$.
\smallskip

Abramovich and Harris noted that $S(d,0,g)$ is true for elementary reasons, proved $S(2,1,g)$,
$S(3,1,g)$ and $S(4,1,g)$ for $g \neq 7$, and showed that  $S(3,2,5)$ is false. Thus they asked
for which values of $d$, $h$ and $g$ the statement holds. In \cite{DF} it is shown that
$S(4,1,7)$ is false, and in \cite{PS} we showed that the curve of genus $7$ providing the counterexample
is algebraically, but not rationally, uniformized by radicals. Debarre and Fahlaoui construct their
example $C$ on the second symmetric product $Sym^2 (E)$ of an elliptic curve. It is clear by their construction that
$C$ has a covering $C'$ that admits a map of degree $4$ or less to $E$; in particular, $C$ is algebraically uniformized
by radicals. The key step in disproving $S(4,1,7)$ in \cite{DF} is to show
that $C$ has no $4:1$ map to $\PP^1$,
and from this we were able to deduce in \cite{PS} that $C$ is not rationally uniformized by radicals.

In this paper we show the existence of curves  of genus $9$ that are algebraically, but not rationally, uniformized by radicals,
and that provide counterexamples to $S(4,2,9)$. We construct such curves in a linear system $|C|$ on the
second symmetric product $X=Sym^2 (Y)$ of a curve of genus $2$. The hard part of the proof again consists in showing the
general curve in $|C|$  has no $g^1_4$. The technique we use is different from that of \cite{DF} and
comes from an idea of Tyurin \cite{tyurin}. Roughly, the idea is to consider in the Hilbert scheme  $Hilb^4 (X)$
the set $\mathcal{S}$ of all divisors appearing in a $g^1_4$ of a curve in the family $|C|$,
and to bound the dimension of this set in order to show the general curve in $|C|$ cannot contain such a divisor.
To find such a bound, one blows down the canonical divisor of $X=  Sym^2 (Y)$ to obtain the Jacobian surface $S$ of $Y$, and then
notes that the fibers of the Abel sum map $Hilb^4 (S) \rightarrow Alb (S)= S$ are symplectic varieties \cite{Beauville}.
The set $\mathcal{S}$ is contained in such a fiber and, being rationally connected,  must lie in a Lagrangian subvariety.
This provides the desired bound for the dimension of $\mathcal{S}$.

The paper is structured as follows. In Section \ref{prelim} we introduce the notation and terminology, and state some well known
results we will use in the paper. In Section \ref{moduli} we construct the family of curves that will eventually provide the example we are looking for,
and show it has dimension ten. In Section \ref{gonal}, which is the technical heart of the paper, we show that a general curve $C$ in our family
has no $g^1_4$.  In Section \ref{dimcount} we show by a dimension count that a general curve $C$ in a ten dimensional family of genus nine curves
cannot be a covering of $\PP^1$ of degree $d \geq 5$ with a primitive and solvable Galois group. Putting together the results of these two sections
in Section \ref{conclusion} we conclude that $C$ is not rationally uniformized by radicals. On the other hand, by construction every curve in the family is algebraically uniformized by radicals, and we therefore obtain examples of genus $9$ curves that are algebraically,
but not rationally uniformized by radicals.

\section{Notation and Preliminary Results} \label{prelim}
In this section we collect some well known facts that we will use in the sequel of the paper.
We begin recalling the cohomology of divisors of small degree on a curve of genus $2$.

\bp \label{recall} \label{Ycohom}
Let $Y$ be a smooth irreducible projective complex curve of genus $2$, and let $B$ be a divisor on $Y$. Denote by $p \mapsto p'$ the hyperelliptic involution
on $Y$, so that $q=p'$ if and only if $p+q$ is a canonical divisor $K_Y$ on $Y$. Then
\begin{itemize}
    \item[i)] if $\deg(B)=2$, then $h^0(Y, \coo_Y(B))=2$ if $B$ is a canonical divisor, $ h^0 (Y,\coo_Y(B))=1$ otherwise.
    \item[ii)] if $\deg(B)=3$, then $B$ is nonspecial, $h^0(Y, \coo_Y(B))=2$, and
    \begin{itemize}
        \item[a)] either $B \sim K_Y +p$, in which case $p$ is a base point of $|B|$;
        \item[b)] or $B-K_Y$ is not effective, in which case $|B|$ has no base points, and
        $\phi_B: Y \ra \PP^1$ is a morphism of degree $3$.
    \end{itemize}
    \item[iii)] if $\deg(B)=4$, then $B$ is nonspecial, the linear series
    $|B|$ is base point free and defines a morphism $\phi_B: Y \ra \PP^2$. Furthermore:
    \begin{itemize}
        \item[a)] either $B \sim 2K_Y$, in which case $\phi_B$ is the hyperelliptic involution $Y \ra \PP^1$ followed
        by the $2$-uple embedding $\PP^1 \ra \PP^2$; in particular, every effective divisor in $|2K_Y|$ is the sum
        $p+p'+q+q'$ of two canonical divisors;
        \item[b)] or $B \sim K_Y+p+q$ where $q \neq p'$, in which case $\phi_B: Y \ra \PP^2$ is birational onto
        a plane quartic curve, which has a node if $p \neq q$ or a cusp if $p=q$; $\phi_B$ separates any pair of points of $Y$ except for
        the pair $(p,q)$.
    \end{itemize}
\end{itemize}
\ep

We will construct our example on the  second symmetric product $X=Sym^2 (Y)$ of a curve of genus two $Y$.
The surface $X$ has irregularity $q(X)=g(Y)=2$ and geometric genus $\ds p_g(X)= {g(Y) \choose 2}=1$,
hence $\chi (\coo_X)=0$.

We identify points of $X$ with degree $2$ effective divisors $p+q$ on $Y$. The quotient map
$$\pi: Y \times Y \la X,  \qquad \pi(p, q)= p+q$$
exhibits $X$ as the quotient of $Y \times Y$ by the involution $\sigma (p,q)=(q,p)$.
Let $\tilde{\Delta}$ denote the diagonal in $Y \times Y$, and let $\Delta= \pi (\tilde{\Delta})=\{2p: p \in Y \}$.
The map $\pi$ is a double cover ramified along $\Delta$, so that
$\pi_* \coo_{Y \times Y} = \coo_X \oplus \mathcal{L}^{-1}$ where $\mathcal{L}$ a line bundle satisfying
$\mathcal L^{\tens 2}= \Ox(\Delta)$. In particular
$\ds \frac{\Delta}{2}$ is an integral divisor class on $X$, and $\pi^* (\ds \frac{\Delta}{2})= \tilde{\Delta}$.

Given a point $p \in Y$,  we denote by $H_p$ the curve
$$
H_p=\{p+q \in X \,|\, q \in Y \} \subset X.
$$
One knows that the canonical divisors of $Y$ and $X$ are related as follows:
{\em if $p+p'$ is a canonical divisor on $Y$, then  the canonical divisor of $X$ is}
$K_X \sim H_p+H_{p'}-\ds \frac{\Delta}{2}$.
%

\medskip
Another way to look at $X$ is via the natural map $X=Div^{(2)}(Y) \rightarrow Pic^{(2)}(Y)$, which exhibits
$X$  as the blow up of the Jacobian variety $Pic^{(2)}(Y)$ at the point corresponding to the canonical divisor $K_Y$.
The exceptional divisor $E \cong \PP^1$ is therefore the unique effective canonical divisor on $X$.
In particular, $E \sim  H_p+H_{p'}-\ds \frac{\Delta}{2}$.

\bigskip
Any divisor $B=  \sum_{p} n_p \, p$ on $Y$ gives rise to a divisor
$\alpha(B)=\sum_p n_p H_p$ on $X$. The map $Pic(Y) \ra Pic(X)$ induced by $\alpha$
is injective, and in fact $Pic (X)$ contains a subgroup isomorphic to
$\alpha (Pic (Y)) \oplus \Z[E]$. Furthermore, a divisor on $X$ is numerically equivalent to zero
if and only if it is linearly equivalent to a divisor of the form $\alpha(B)=\sum_p n_p H_p$ with
$\sum n_p=0$.

When $Y$ has general moduli,
the endomorphism ring of the Jacobian variety $Jac(Y)$ is $\Z$ generated by the identity \cite{koizumi},
and from this it follows $Pic (X)= \alpha (Pic (Y)) \oplus \Z[E]$.

One can easily compute the cohomology of the divisors $\coo_X (\alpha(B))$:
\bp \label{compute}  \
Let $B= \sum_p n_p \, p$ be a divisor on $Y$, and let
$\alpha(B)= \sum_p n_p H_p$ be the corresponding divisor on $X$. Then $\alpha(B)$ is effective if and only if
$B$ is effective. Furthermore
\begin{itemize}
    \item[i)] If $\deg (B)=1$ and $B=p$ is effective, then
     $$h^0(X, \coo_X(H_p))=h^1(X, \coo_X(H_p))=1, \quad  h^2(X, \coo_X(H_p))=0$$
    \item[ii)] if $\deg(B)=2$ and $B \sim K_Y$, then
$$
h^0(X, \coo_X (\alpha(B)))= 3, \quad h^1 (X, \coo_X (\alpha(B)))= 2, \quad h^2(X, \coo_X (\alpha(B)))= 0
$$
    \item[iii)] if $\deg(B)=2$ and $B \nsim K_Y$, then
$$
h^0(X, \coo_X (\alpha(B)))= 1, \quad h^1 (X, \coo_X (\alpha(B)))=h^2(X, \coo_X (\alpha(B)))= 0
$$
   \item[iv)] if $\deg(B)=b \geq 3$, then
$$
h^0(X, \coo_X (\alpha(B)))= \frac{1}{2}b(b-1), \quad h^1 (X, \coo_X (\alpha(B)))= h^2(X, \coo_X (\alpha(B)))= 0
$$
\end{itemize}

\ep
%

Let $N^1 (X)$ denote the N\'eron-Severi group of $X$ (divisors modulo {\em numerical} equivalence
for which we use the symbol $\equiv$).
We denote by $H$ the class of $H_p$ in $ N^1(X)$, while we keep using the symbols
 $\Delta$ and $E$ for the classes of $\Delta$ and $E$ in $N^1(X)$.
From the above description of $Pic(X)$ we see that $N^1(X)$ contains the subgroup
$$
\Z [H] \oplus \Z [E]
$$
and $N^1(X)= \Z [H] \oplus \Z [E]$ when $Y$ has general moduli. Note that
$$ H^2=1 \qquad H \cdot E=1 \qquad E^2=-1. $$

\bp \ \label{cones} In the N\'eron-Severi group of $X$
\begin{itemize}
 \item  some positive multiple of an integral class $\ds aH - bE$ is effective
      if and only if $a \geq 0$ and $a \geq 2b$.
 \item an integral class $\ds aH-b E$ is ample (respectively nef)
      if and only if $a>-b$ and $a>3b$ (respectively $a \geq -b$ and $a \geq 3b$).

     \end{itemize}
\ep
\begin{proof}
The first statement follows from the fact that $H$ is ample and
the two effective curves $\Delta \equiv 4H-2E$ and $E$ have negative self-intersection.

To check the second statement, let  $ R_1\equiv H+E$ and $ \ds R_2\equiv 3H-E \equiv H+\frac{\Delta}{2}$.
Then $R_1.E=R_2.\Delta=0$. Since $H$ is ample, it follows that $R_1$ and $R_2$ are nef but not ample.
\end{proof}

        \section{A Ten Dimensional Family of Genus Nine Curves} \label{moduli}
In this section we construct a $10$ dimensional family of curves of genus $9$ whose general member we will
eventually show to be algebraically but not rationally uniformized by radicals. We keep the notation
we introduced in the previous section for divisors on the surface $X=Sym^2 (Y)$, where
$Y$ denotes a smooth projective curve of genus $2$.
We begin by showing the existence of smooth genus $9$ curves numerically equivalent to $3H+E$ on $X$.

\bp \label{L}
Let $L$ be a divisor on $X$ numerically equivalent to $3H +E$.
Then
$$h^0(\Ox(L))= 6, \qquad h^1(\Ox(L))=h^2(\Ox(L))=0.$$
Furthermore:
\begin{itemize}
    \item the linear system $|L|$ is base point free, and defines a  morphism $\phi_L: X \ra \PP^5$ that
    maps $X$ birationally onto its image.
    \item the general curve $C \in |L|$ is smooth and irreducible; the genus
of such a curve $C$ is $g(C)=9$ and its self-intersection is $C^2=14$.
\end{itemize}

\ep
\begin{proof}
Any divisor numerically equivalent to $3H$ is ample, hence $h^1(\Ox(L))=h^2(\Ox(L))=0$ by
Kodaira vanishing theorem. Therefore
$$
h^0(\Ox(L))= \chi (\Ox(L))= \frac{1}{2} (L-K_X).L + \chi \coo_X =  6.
$$

Since $L \equiv K_X+3H$ and $H$ is ample, it follows from Reider's Theorem \cite{reider}  that $|L|$ is base point free,
hence the generic curve $C \in |L|$ is smooth (the ground field is $\mathbb{C}$).
Also $L$ itself is ample, hence any curve in $|L|$ is (numerically) connected (cf. \cite{friedman} Ex. 13 p. 24).

We need to check that $\phi_L$ is birational onto its image. The divisor $L$ is linearly equivalent to $E+\alpha (B_0)$ where $B_0$ is a divisor of degree $3$ on $Y$,
thus $\coo_X(L) \cong \coo_X (E+H_p+H_q+H_r)$ where $(p,q,r)$ are three points of $Y$.

By \cite{reider}, Remark 1.2.2, if two points of $X$ are not separated by $|L|$, then there is
a curve $F$ numerically equivalent to $H$ passing through the two points. By Proposition \ref{compute} $F=H_x$ for some $x \in Y$.
We will now show that for every $x$  the linear system $|L|$ separates all but one pair of points of $H_x$, unless
$\coo_Y(p+q+r) \cong \coo_Y(x+K_Y)$ so that $x$ is the unique base point of  $\coo_Y(p+q+r)$. This shows that $\phi_L$ is one to one on $X$ except on an at most one dimensional locus, and concludes the proof.

From the exact sequence
$$
\exact{\coo_X(L-H_x)}{\coo_X (L)}{\coo_{H_x} (L)}
$$
we see every section of $\coo_{H_x} (L)$ arises from a section of $\coo_X (L)$ because $H^1 (\coo_X(L-H_x))=0$ by Kodaira vanishing.
Thus it is enough to show that $\coo_{H_x} (L)$ separates all but a pair of points of $H_x$ if $x$ is not a base point  of $\coo_Y(p+q+r)$.

Thus we assume $x$ is not a base point of $\coo_Y(p+q+r)$, and we can then take  $p$, $q$ and $r$
distinct from $x$.
The restriction $\coo_{H_x} (L)$ of $L$ to the curve  $H_x \cong Y$ is $\coo_Y( x'+p+q+r)$, where
$x'$ is the conjugate point of $x$ so that $x+x' \sim K_Y$. If we had
 $x'+p+q+r \sim 2K_Y$, then $p+q+r \sim x+K_Y$ contradicting the assumption that $x$ is not
 a base point of $\coo_Y(p+q+r)$. Thus $x'+p+q+r \sim 2K_Y$, and
 thus $\coo_{H_x} (L)$ separates all but one pair of points
of $H_x$ by Proposition \ref{Ycohom}.
\end{proof}

We outline now the standard arguments that allow one to compute the dimension of the family of genus $9$ curves $C$ arising as
in the previous proposition.

\bp \label{count}
In the moduli space of curve of genus $9$ there is a $10$ dimensional family of curves whose general member is a curve $C$ numerically equivalent
to $3H+E$ on a surface $X=Sym^2 (Y)$, where $Y$ is a general curve of genus $2$.
\ep

\begin{proof}
Let $f: \mathcal{Y} \ra B$ be a smooth family of genus two curves such that the associated moduli map
$B \ra \mathcal{M}_2$ is generically finite and dominant, and let
$$p: \mathcal{X} = \mathcal{Y} \times_B \mathcal{Y} / \mathcal{S}_2 \ra B$$
be the corresponding family of symmetric products.

Consider the relative Hilbert scheme $\mathcal{H}=Hilb (\mathcal{X}/B)$.
We fix a closed point $b \in B$, and let $X=\mathcal{X}_b $ and $Y=\mathcal{Y}_b$.
Then the fiber $\mathcal{H}_b$  is the Hilbert scheme $Hilb(X)$.

Fix a smooth curve $C\equiv 3H+E$ on $X$.
Since $H^1 (\coo_X (C))=0$, the Hilbert schemes $\mathcal{H}$ and $\mathcal{H}_b$ are smooth at the point
$[C]$ corresponding to $C$, and there exists an exact sequence of tangent spaces
$$
\exact{T_{[C]} \mathcal{H}_b}{T_{[C]} \mathcal{H}}{T_b B}.
$$

\noindent
{\bf Claim}
If $\alpha: \mathcal{H}_b \ra \mathcal{M}_9$ denotes the moduli map, its differential
$$
d \alpha: T_{[C]} \mathcal{H}_b \cong H^0 (C, \coo_C (C)) \ra T_{[C]} \mathcal{M}_b \cong H^1 (C, \mathcal{T}_C)
$$
is injective

\vspace{.3cm}
Since the kernel of $d \alpha$ is (a quotient of) $H^0 (\mathcal{T}_X |_C)$, it is enough
to show the latter group vanishes. For this, we look at the blow up map
$\rho: X \ra S= Pic^2 (Y) \cong Jac (Y)$. Restricting the exact sequence
$$
\exact{\mathcal{T}_X}{\rho^* \mathcal{T}_S \cong \coo_X \oplus \coo_X}{\mathcal{N}_\rho \cong \coo_E(-E) \cong \coo_{\PP^1} (1)}
$$
to the curves $C$ one obtains a new exact sequence
$$
\exact{\mathcal{T}_X |_C}{ (\rho^* \mathcal{T}_S )|_C \cong \coo_C^{\oplus 2} }{\coo_Z }
$$
where $Z$ is the length $2$ zero dimensional intersection of $C$ and $E$.
One checks
$$
H^0 ((\rho^* \mathcal{T}_S )|_C) \ra H^0 (\coo_Z)
$$
is an isomorphism, hence its kernel $H^0 ( \mathcal{T}_X |_C) $ vanishes, proving the claim.

Now let $\beta: \mathcal{H} \ra \mathcal{M}_b$ denote the moduli map on the relative Hilbert scheme. Then $d \beta$
induces a map
$
T_b B \cong H^1 (\mathcal{T}_Y) \ra Coker (d \alpha).
$
Now observe that
$$
Coker (d \alpha) \cong  H^1( \mathcal{T}_C)/Im (H^0 (\coo_C(C))  \hookrightarrow  H^1( (\mathcal{T}_X)|_C)
$$
Thus we obtain a map $$\phi:T_b B \cong H^1 (\mathcal{T}_Y) \ra H^1( (\mathcal{T}_X)|_C).$$

Assume for the moment that $\phi$ is injective. Then $d \beta: T_{[C]} \mathcal{H} \ra T_{[C]} \mathcal{M}_b $
is injective, and this proves the proposition because
$$
h^0 (\coo_C(C))+ h^1 (\mathcal{T}_Y)=7+3=10.
$$

To show $\phi$ is injective, notice that it
factors through the map
$
\psi: H^1 (\mathcal{T}_Y) \ra H^1( \mathcal{T}_X)
$
that associates to a in infinitesimal deformation of $Y$ the corresponding deformation of $X$.
The map $\psi$ is injective: identifying $Y$ with the diagonal $\Delta \subset X$, we see the kernel of
$\psi$ is contained in the kernel of
$
H^1 (\mathcal{T}_\Delta) \ra H^1( (\mathcal{T}_X)|_\Delta),
$
hence in $H^0 (\coo_\Delta (\Delta) )$. The latter group vanishes, hence $\psi$ is injective.

Finally, $\phi$ is obtained composing $\psi$ with
$$
H^1( \mathcal{T}_X) \ra H^1( (\mathcal{T}_X)|_C)
$$
whose kernel is $H^1( \mathcal{T}_X (-C))$. This cohomology group is
contained in $H^1 (\coo_X(-C) ^{\oplus 2})$, which vanishes because $\coo_X(C)$ is ample. Thus  $\phi:  H^1 (\mathcal{T}_Y) \ra H^1( (\mathcal{T}_X)|_C)$ is injective, and this concludes the proof.
\end{proof}

\section{The General Curve in the Family has no $g^1_4$} \label{gonal}
This section contains the main technical difficulty of the paper, which is
to prove that the general curve in our family of genus $9$ curves has no $g^1_4$.
To be more precise, recall that  the Jacobian of a {\em general} smooth projective curve is simple
\cite{koizumi}. Therefore, if $Y$ is a general curve of genus $2$,
the N\'eron Severi group of $X=Sym^2 (Y)$ is generated by the classes of $H$ and $E$. If this is the case
and $L$ is a divisor on $X$ numerically equivalent to $3H+E$, we will show that in the linear system
$|L|$ there is an open dense subset of smooth curves that have no $g^1_4$. Before we can prove this, we need
to establish the fact that these curves have no  map onto a non rational curve.

\bp \label{maps}
Suppose $Y$ is a smooth projective curve of genus $2$ whose Jacobian is simple, and let $X=Sym^2 (Y)$. Suppose
$L$ is a divisor on $X$ numerically equivalent to $3H +E$, and $C$ is a general curve in the linear system $|L|$. If
$D$ is a smooth curve for which there is a finite morphism $C \ra D$
 of degree $d \geq 2$, then $D$ is rational.
\ep
\begin{proof}
Suppose $f: C \ra D$ is a finite morphism of smooth curves of degree $d \geq 2$.
By \cite[Theorem 1.1]{CVV} the Jacobian of $C$ satisfies
$$
End (Jac(C))= \Z \times End (Alb(X))= \Z \times End (Jac (Y))
$$
Since $End(Jac (Y))= \Z$, the abelian subvarieties of $Jac(C)$ have
dimension $0$, $2$, $7$ or $9$. It follows that, if $D$ is not rational and $d \geq 2$, then
$g(D)=2$ and there is an isogeny $\phi: Jac(Y) \ra Jac (D)$ which factors through
the map $Jac(C) \ra Jac(D)$ induced by $f$.

Let $C_0$ be the inverse image of $C$ in $Jac (Y)$ under the map $Jac(Y) \ra Jac (C)$.
Since $\phi: Jac(Y) \ra Jac (D)$ is \'etale, so is its restriction $\psi: C_0 \ra D$. But $\psi$ factors through
$f:C \ra D$, thus $f$ is \'etale.

The family of genus $9$ curves that are \'etale covers of a genus two curve has dimension $3 = \dim \mathcal{M}_2$.
On the other hand, $C$ varies in $|L| \cong \PP^5$, and $|L|$ that by the proof of (\ref{count}) maps with zero dimensional fibers to the moduli space $\mathcal{M}_9$.
Therefore the general $C \in |L|$ is not an \'etale cover of a genus two curve,
finishing the proof of the lemma.
\end{proof}

  \begin{thm} \label{g14}
Suppose $Y$ is a smooth projective curve of genus $2$ whose Jacobian is simple, and let $X=Sym^2 (Y)$. Suppose
$L$ is a divisor on $X$ numerically equivalent to $3H +E$, and $C$ is a general curve in the linear system $|L|$.
Then $C$ has no $g^1_{4}$.
    \end{thm}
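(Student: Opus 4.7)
I'll argue by contradiction. Suppose the general $C \in |L|$ admits a $g^1_{4}$; by Proposition~\ref{maps}, any such pencil is base-point free and defines a degree-$4$ morphism $C \to \PP^1$. Let
\[
\mathcal{S} = \{Z \in Hilb^4(X) : Z \text{ is a divisor of a } g^1_4 \text{ on some smooth } C \in |L|\},
\]
and fix an irreducible component $\mathcal{S}_0$ of $\mathcal{S}$ whose associated incidence dominates $|L|$. The plan is to bound $\dim \mathcal{S}_0$ from above via holomorphic symplectic geometry, and then to contradict this bound with a dimension count on an incidence variety.

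For the upper bound I proceed in three steps. First, $\mathcal{S}_0$ lies in a single fiber of the Abel-sum map $\sigma_X : Hilb^4(X) \to Alb(X) = S = Jac(Y)$: on each $\PP^1$-pencil of $g^1_4$-divisors on a fixed $C$, $\sigma_X$ is constant by Abel's theorem, so $\sigma_X|_{\mathcal{S}_0}$ factors through a generically finite cover of $|L| = \PP^5$, and since $\PP^5$ is rational and $S$ is an abelian variety the composition must be constant. Second, a general $Z \in \mathcal{S}_0$ is disjoint from the exceptional curve $E$ (because $C \cdot E = 2$ and the $g^1_4$-pencil sweeps $C$), so the blow-down $\rho: X \to S$ identifies $\mathcal{S}_0$ birationally with its image in the fiber $\sigma_S^{-1}(s_0) \subset Hilb^4(S)$. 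By Beauville \cite{Beauville}, this fiber is a generalized Kummer variety---a smooth holomorphic symplectic variety of dimension $6$. Third, $\mathcal{S}_0$ is covered by the rational curves coming from the $g^1_4$-pencils, and through each $Z \in \mathcal{S}_0$ there passes a positive-dimensional family of such $\PP^1$'s (one for each curve in the $\PP^1 \subset |L|$ through $Z$ whose $g^1_4$ contains $Z$). This abundance should imply that $\mathcal{S}_0$ is rationally connected, hence $H^0(\Omega^2) = 0$, hence the symplectic form of the Kummer fiber restricts to zero on $\mathcal{S}_0$; so $\mathcal{S}_0$ is isotropic and $\dim \mathcal{S}_0 \leq 3$.

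To conclude, consider the incidence $I = \{(C, Z) \in |L| \times \mathcal{S}_0 : Z \subset C\}$. On the one hand, by the contradiction hypothesis $I \to |L|$ is dominant with fibers containing a $\PP^1$ of $g^1_4$-divisors, so $\dim I \geq \dim |L| + 1 = 6$. On the other hand, a general $Z \in \mathcal{S}_0$ imposes independent conditions on the six-dimensional system $|L|$, so the fiber of $I \to \mathcal{S}_0$ is a linear pencil of dimension $\leq 1$, yielding $\dim I \leq \dim \mathcal{S}_0 + 1 \leq 4$. The contradiction $6 \leq 4$ proves the theorem.

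The main obstacle is the rational-connectedness claim for $\mathcal{S}_0$. A clean route is to describe $\mathcal{S}_0$ as the image of a $\PP^1$-bundle $\mathcal{W}_0 \to B_0$, where $B_0$ is (a component of) the moduli space of pairs $(C, g^1_4)$ with $C \in |L|$, and to show that $B_0$ is unirational---as a generically finite cover of $\PP^5$, together with some control on its ramification. Then $\mathcal{W}_0$ and its image $\mathcal{S}_0$ would be unirational, hence have no holomorphic $2$-forms, finishing the isotropy step. A secondary technical point is to justify that the fiber of $I \to \mathcal{S}_0$ really has dimension at most $1$ at the generic point of $\mathcal{S}_0$ (i.e., that the locus in $Hilb^4(X)$ where four points fail to impose independent conditions on $|L|$ does not contain $\mathcal{S}_0$); here one can invoke a direct analysis of divisors of type $aH - bE$ on $X$ via Proposition~\ref{compute}.
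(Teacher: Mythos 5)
Your first half follows the paper's own strategy: the incidence variety $W$ of pairs $(C,Z)$ is rationally connected (in the paper this is deduced from Graber--Harris--Starr applied to the $\PP^1$-fibration $\pi_1\colon W \to |L|$, not merely from $W$ being covered by rational curves, which only gives uniruledness), its image in $Hilb^4(S)$ lands in a fiber of the Abel sum map, and Beauville's symplectic structure forces that image to be isotropic, hence of dimension at most $3$. The genuine gap is in your concluding dimension count. From $\dim I \geq 6$ and $\dim \mathcal{S}_0 \leq 3$ one can only conclude that the general fiber of $I \to \mathcal{S}_0$ has dimension at least $3$, i.e.\ $\dim |L(-Z)| \geq 3$; your assertion that a general $Z \in \mathcal{S}_0$ imposes independent conditions on $|L|$ (so that the fiber is a pencil) is exactly what fails, and it is not a ``secondary technical point'' but the entire remaining difficulty. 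The Lagrangian bound is perfectly consistent with the four points of $\phi_L(Z)$ spanning only a line in $\PP^5$, and that is precisely the configuration the paper is left with after the symplectic argument: it must then introduce the curves $B_a$ swept out by the $g^1_4$-divisors through a fixed general point $a$, prove $\dim |L(-B_a)| \geq 2$, use the hypothesis $N^1(X)=\Z[H]\oplus\Z[E]$ to pin down $B_a \equiv H$ or $B_a \equiv H+E$, and exclude both possibilities by cohomology computations on $X$. None of this appears in your proposal, so no contradiction has actually been reached.

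A second omission: your rational-connectedness and Abel-constancy steps tacitly assume the general $C$ carries a unique $g^1_4$, so that the fiber of $\pi_1$ is a single $\PP^1$ (making GHS applicable) and the parameter space of pairs $(C,g^1_4)$ is birational to $\PP^5$ (making the map to the abelian surface constant). If the general curve has two or more $g^1_4$'s, the fiber of $\pi_1$ over a general $C$ is disconnected and this argument breaks down; the paper treats this in a separate Case 2, showing that two distinct $g^1_4$'s would realize $C$ as a smooth $(4,4)$ curve on $\PP^1\times\PP^1$ and ruling this out by classifying the complete base-point-free $g^2_8$'s on such a curve and comparing with the $g^2_8$ induced by $H_p+H_{p'}$ on $X$. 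Your proposal does not address this case.
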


\begin{proof}
Suppose by way of contradiction that the general curve in $|L|$ has gonality $d \leq 4$,
and therefore has a base point free $g^1_d$ with $2 \leq d \leq 4$.
We distinguish two cases, according to whether the $g^1_d$ is unique or not.

\bigskip
\noindent {\bf Case 1}
Assume first the general curve $C$ in the family has a unique $g^1_d$.

The natural map $X=Sym^2 (Y) \ra Pic^2 (Y)$ identifies $X$ with the blow up of the abelian
surface $S=Jac (Y)$ at the origin, and $E$ is the exceptional divisor. By a theorem
of Fogarty's \cite{Fogarty} the Hilbert scheme
$\mathcal{H}= Hilb^d (S)$  parametrizing zero dimensional subschemes of $S$ of length $d$
is a smooth and irreducible projective variety of dimension $2d$.
We will identify a zero dimensional subscheme $Z$ of $S$ that does not contain the origin $0_S$, with its preimage
in $X$. Then it makes sense to look at the incidence variety
$$
\left\{
(C, Z) \in |L| \times \mathcal{H}: \; \mbox{$C$ is smooth, \ }, \;
0_S \notin Z, \; Z \subseteq C, \; h^0 (C, \coo_C (Z)) \geq 2
\right\}.
$$

Let $W$ be an irreducible component of this locus that maps dominantly to $|L|$, and let
$\pi_1$ and $\pi_2$ be the two projections of $W$ on $|L|$ and
$\mathcal{H}$ respectively. Since  the general curve in $|L|$ has a unique $g^1_d$, the map $\pi_1$ is dominant
and its  general fiber is a rational curve. Therefore $W$ is a rationally connected variety of dimension $6$
\cite{ghs,ds}.

Now look at the Abel sum map
$$
\alpha: \mathcal{H}= Hilb^d (S) \rightarrow Alb (S)= S.
$$
Since $W$ is rationally connected, the image $\pi_2 (W)$ of $W$ in $\mathcal{H}$ must
be contained in a fiber $K$ of $\alpha$. The fiber $K$ is a symplectic variety of dimension $2(d-1)$- see \cite{Beauville} .
The pull back of the symplectic form to $W$
vanishes because $W$ is rationally connected, hence $\pi_2 (W)$ is a Lagrangian subvariety of $K$.  Therefore
$$\dim \pi_2 (W) \leq \frac{1}{2} \dim K= d-1,$$
and the generic fiber of $\pi_2$ has dimension at least $7-d$.

By Proposition \ref{L} the linear system $|L|$ defines a morphism $\phi: X \ra \PP^5$ that
maps $X$ birationally onto its image. Note that $\PP^5$ is the dual projective space of $|L| \cong \PP (H^0 (X, L)) $.
For a closed subscheme $V \subset X$, we let $|L(-V)| \cong \PP( H^0 (X, L \otimes \ideal{V}))$
denote the linear system of curves in $|L|$ that contain $V$. Then the {\em linear span} of
$\phi(V)$ in $\PP^5$ is the subspace dual to $|L(-V)|$; the dimension of the linear span
of $\phi(V)$ is therefore $4- \dim |L(-V)|$.

Now let $(C, Z)$ be a point of $W$. Then $|L(-Z)|$ contains the fiber $\pi_2^{-1}(Z)$, hence
$$
\dim |L(-Z)| \geq \dim (\pi_2^{-1}(Z)) \geq 7-d.
$$
It follows that  the image $\phi(Z)$ of $Z$ in $\PP^5$ is contained in a linear space of dimension
$d-3$. Since $d \leq 4$, this says that $\phi(Z)$ is contained in a line. Since $\phi$ is birational,
for $Z$ general the linear span of $\phi(Z)$ will be a line, hence $d=4$,
and $\dim |L(-Z)| =3$.
 Since $\dim (\pi_2^{-1}(Z)) \geq 3$, we conclude
that $\dim (\pi_2^{-1}(Z))=3$ for a general $Z$ in $\pi_2 (W)$, and therefore
the general curve $C$ in $|L(-Z)|$  belongs to $\pi_2^{-1}(Z)$, that is, $h^0 (\coo_C (Z)) \geq 2$. By semicontinuity,
$h^0 (\coo_C (Z)) \geq 2$ for {\em every} smooth $C$ in $|L(-Z)|$.

For a point $a \in X$ define
\begin{eqnarray*}
B^0_a=&
\left\{
x \in X: \mbox{there exists $Z \in \pi_2(W)$ such that $a,x \in Z$ } \right.
\\
&
\left.
\mbox{and $h^0 \coo_C (Z) \geq 2$ for every smooth $C$ in $|L(-Z)|$}
\right\}.
\end{eqnarray*}
and let $B_a$ denote the closure of $B^0_a$ in $X$. For a general choice of $a$, the dimension
of $B_a$ is one. To see this, let $W_a$ denote the set of pairs $(C, Z)$ in $W$ for which $a \in Z$.
Then $W_a$ has dimension $4$  because $\pi_1$ maps $W_a$ generically onto  $|L(-a)| \cong \PP^4$,
with zero dimensional fibers as there is a unique divisor in the $g^1_4$
of $C$ that contains $a$. Since the general fiber of $\pi_2: W_a \ra \mathcal{H}$ has dimension $3$,
the image of $W_a$ in the Hilbert scheme is a curve $T$. Therefore the restriction $U_T$
to $T$ of the universal family over $\mathcal{H}$ is also a curve, and so is
$B_a$ which is the closure of the projection in $X$ of $U_T$ with the point $a$ removed.

We claim that, for a general $C_0 \in |L(-a)|$,
$$B_a. C_0 = ma+x_1+x_2+x_3$$
where $Z_0=a+x_1+x_2+x_3$ is the unique element of the $g^1_4$ of $C_0$ that contains $a$,
and $m \geq 0$. Indeed, since $C_0$ is general in $|L(-a)|$, it is smooth and it does not contain
any of the finitely many points of $B_a -B^0_a$ except perhaps $a$. So, if $x \in B_a \cap C_0 \setminus \{a\}$,
there is  $Z \in \pi_2(W)$ such that $a,x \in Z$ and $h^0 \coo_{C} (Z) \geq 2$
for every smooth $C \in |L(-Z)|$. Now
$$
3 \leq \dim |L(-Z)| = \dim |L-(a+x)|
$$
hence $|L(-Z)| = |L-(a+x)|$ and $C_0 \in |L(-Z)| $. Then
$h^0 \coo_{C_0} (Z) \geq 2$, and
$Z=Z_0$ because there is a unique divisor in the $g^1_4$
of $C_0$ that contains $a$. In particular, $x \in \{x_1,x_2,x_3\}$ proving our claim.

The claim implies that the intersection of
$\phi(B_a)$ with a general hyperplane  of $\PP^5$ through $\phi(a)$ is contained
in a line: the hyperplane corresponds to $C_0$, and the line is the linear span of $\phi(Z_0)$.
It follows that $\phi(B_a)$ is contained in a $\PP^2$ through $\phi(a)$, hence
$$
\dim |L(-B_a)| \geq 2
$$

We conclude that $B_a$ and $C-B_a$ are effective, with $h^0 \coo_X(C-B_a) \geq 3$.
Now we use the fact that the N\'eron Severi group of $X$ is generated by $E$ and $H$.
Since $B_a$ moves with $a$, we see $B_a \neq E,2E$. On the other hand,
$C-B_a$ can't be numerically equivalent to $H+2E$, $H+E$ or $H$ because $h^0 \coo_X(C-B_a) \geq 3$.
It then follows from
Proposition \ref{cones} that
$C-B_a$ is numerically equivalent to either $2H$ or $2H+E$, so that either
$B_a \equiv H$ or $B_a \equiv H+E$.

Suppose $B_a \equiv H$. Then $B_a$ is one of the curves $H_p$ (with $p \in Y$). As $B_a.C=H.C =4$, the $g^1_4$ on $C$ is
$|H_{C}|$, where $H_C=H_p .C$. Now look at the exact  sequence
$$
\exact{\coo_X(H_p-C)}{\coo_X (H_p)}{\coo_C (H_C)}
$$

Now $C-H \equiv 2H+E= 2H +K_X$ is ample by \ref{cones}, hence $H^1 \coo_X(H_p-C)=0$,
therefore $h^0 (C, \coo_C (H_C))=1$, so $H_C$ cannot be a pencil, and this case does not occur.

Suppose now $B_a \equiv H+E$, that is, $B_a= H_p +E$ for some $p \in Y$. Then $$B_a.C=(H+E).C=4+2=6$$
contains the $g^1_4$, hence $h^0 ((H_p+E)_C) \geq 2$.
But from the exact sequence
$$
\exact{\coo_X(H_p+E-C)}{\coo_X (H_p+E)}{\coo_C ((H+E)_C)}
$$
one obtains a contradiction as above (note that $h^0 (\coo_X(H_p+E))= \chi (\coo_X(H_p+E))$
because $H_p+E=H_p+K_X$, and $\chi (\coo_X(H_p+E))= \ds \frac{1}{2} (H+E).H=1$).

\vspace{.2cm}
\noindent
{\bf Case 2} \ Suppose now the general $C \in |L|$ has more than one $g^1_d$.
Then $d =4$ because an hyperelliptic curve has a unique $g^1_2$, and a trigonal
curve of genus $g >4$ has a unique $g^1_3$.
So suppose the general $C \in |L|$ has gonality $4$ and has two distinct $g^1_4$.
The two $g^1_4$ define a morphism
$$
\psi: C \ra Q=\PP^1 \times \PP^1$$
of degree $e$ onto a a divisor $B$ of type $(4/e, 4/e)$ on $Q$.
We cannot have $e=4$, as otherwise $B \cong \PP^1$ and the two linear series coincide.
If $e=2$, then $B$ cannot be rational because $C$ is not hyperelliptic. But
$B$ cannot be an elliptic curve because $C$ does not have morphism to curves
of genus $1$ by Proposition \ref{maps}. Hence $e=1$ and $\psi:C \ra B$ is birational.

Since $B$ has arithmetic genus $9$, the map $\psi$ is an isomorphism. Thus $B \cong C$ is a smooth curve of type $(4,4)$ on $Q=\PP^1 \times \PP^1$. We will show that this is not possible by proving that: (a) the curve $C$ has a base point free complete $g^2_8$ that defines a  map
$\phi: C \ra \PP^2$ that is not $4:1$ onto its image; (b)  the only base point free and complete
$g^2_8$  on a divisor $B$ of type $(4,4)$ on $Q$  are $|\coo_B(2,0)|$ and $|\coo_B(0,2)|$, and these define $4:1$ maps.

We now show a general $C$ in $|L|$ has a complete and base point free $g^2_8$ that defines a morphism
$g:C \ra \PP^2$ that is not $4:1$ onto its image.
Recall $X$ is the blow up of the abelian surface $S= Jac(Y)\cong Pic^2(Y)$ at the origin $0_S \cong \coo_Y(K_Y)$. Given a point $p$ in $Y$ we denote
by $\theta_p$ the theta divisor
$$
\theta_p= \{\coo_Y(p+y) \,|\, y \in Y \}.
$$
If $p+p'=K_Y$, the divisor $\theta_p+ \theta_{p'}$ is symmetric with respect to the involution of the abelian
surface, and defines a morphism $S \ra \PP^3$ whose image is a quartic Kummer surface $T$. The pull back
of $\theta_p+ \theta_{p'}$ to $X$ is the divisor
$H_p+H_{p'}+2E$, which therefore defines a $2:1$ morphism $X \ra \PP^3$ whose image is the Kummer surface $T$.
This morphism maps $C$ birationally onto its image in $T$ because
$C$ has no $2:1$ morphism to a curve, as it is not hyperelliptic and does not have any morphism of degree
$d \geq 2$ onto a nonrational curve.

Next we project the Kummer surface from the node that is the image of $0_S \cong \coo_Y(K_Y)$: this amounts
to consider the morphism $X \ra \PP^2$ defined by $H_p+H_{p'}$ (one can check
$h^0\left(X,\coo_X\left(H_p+H_{p'}\right)\right)$ $=3$
using Proposition \ref{compute}.ii).
This projection is
a degree $2$ morphism $\gamma: T \ra \PP^2$. Since $C$ maps birationally onto its image in $T$,
the restriction  $g: C \ra \PP^2$ of $\gamma$ to $C$ is either birational or $2:1$ onto its image, in any case is not $4:1$.

To finish, observe that $g^*( \coo_{\PP^2}(1))$ is the line bundle corresponding to the divisor
$\left(H_p+H_{p'}\right)|_C$. We claim $\left(H_p+H_{p'}\right)|_C$ moves in a complete $g^2_8$ on $C$.
To compute $\coo_C \left(H_p+H_{p'}\right)|_C$, we recall
$
h^0\left(X,\coo_X\left(H_p+H_{p'}\right)\right) =3
$ and look at the exact sequence
$$
\exact{\coo_X\left(H_p+H_{p'}-C\right)}{\coo_X \left(H_p+H_{p'}\right)}{\coo_C \left(\left(H_p+H_{p'}\right)|_{C}\right)}
$$
Since $C -H_p -H_{p'} \equiv H+E$ is big and nef by \ref{cones}, the $H^0$ and $H^1$ of $H_p+H_{p'}-C$ vanish,
hence
$$
h^0\left(C, \coo_C \left(\left(H_p+H_{p'}\right)|_{C}\right)\right)=3.
$$

Summing up we have found a complete $g^2_8$ on $C$ that defines a morphism $g: C \ra \PP^2$ that is not $4:1$ onto its image.

\bigskip
To complete the proof, we need to show that, if $B$ is a smooth divisor of type $(4,4)$ on the quadric surface $Q \subset \PP^3$,
then the only base point free and complete
$g^2_8$  on  $B$  are $|\coo_B(2,0)|$ and $|\coo_B(0,2)|$.
So let $\delta$ be an effective divisor of degree $8$ on $B$ such that $|\delta|$ is a complete and base point free $g_8^2$. Then
$h^0 (B, \coo_B(\delta))=3$ and we may assume that $\delta$ consists of $8$ distinct points.

By Riemann-Roch
$$
h^0 (B, \coo_B (K_B-\delta))=3+\deg(K_B-\delta)+1-g(B)=3
$$
Since $B$ is a divisor of type $(4,4)$, by adjunction
$K_B=2H$, where $\coo_B(H)=\coo_B(1,1)$. Therefore $\coo_B(K_B-\delta)=\ideal{\delta,B} (2H)$, and
$$
h^0 (Q, \ideal{\delta,Q}(2,2)) = h^0 (B, \ideal{\delta,B} (2H))=3
$$
because $H^0 (Q, \ideal{B,Q})=H^1 (Q, \ideal{B,Q})= 0$. Thus we see that the linear system
$$\mathcal{D}=|\coo_Q(2,2)-\delta|$$
cut out on $Q$ by quadric surfaces containing $\delta$ has projective dimension $2$.

If the linear system $\mathcal{D}$ had no fixed component, then two general elements
in the linear system would meet properly in a zero dimensional scheme of degree
$8$ containing $\delta$, hence equal to $\delta$. Thus $\delta$ would be a complete intersection of three quadrics in $\PP^3$,
contradicting  $h^0 (Q,\ideal{\delta,Q}(2,2))=3$.
Thus $\mathcal{D}$ has a fixed curve, say $D$. We write $\delta= \alpha + \beta$
where  $\alpha$ consists of those points of $\delta$ that are in the support of $D$.

\vspace{.2cm}
\noindent \textbf{Case a: } $D$ is a line, with respect to the embedding of $Q$ in $\PP^3$ by $\coo_Q(1,1)$, say of type $(0,1)$. \

\vspace{.2cm}
Then the linear system $\mathcal{D}$ is, up to removing $D$, the linear system
$$
|\coo_{Q} (2,1)-\beta|
$$

Observe that $\beta$ cannot contain $3$ collinear points, otherwise the
line through them would be contained in the fixed component of $\mathcal{D}$. Similarly,
if $\beta$ had $4$ points in a plane $\Pi$, then there would be a fixed component of
$\mathcal{D}$ contained in $\Pi$ and different from $D$, which is absurd.

Since $D$ is a line and $\alpha \subset D \cap B$, there are at most four points in $\alpha$,
so $\beta$  contains at least $4$ points $P_1$, $P_2$ $P_3$ and $P_4$. As $\beta$ has no $3$ collinear points, and no $4$ coplanar points,
the points $P_i$ impose independent conditions on $|\coo_Q(2,1)|$:
a divisor of type $(2,1)$ containing only the first three points is given by a conic through $P_1$, $P_2$, and $P_3$, plus a line not containing
$P_4$. Then
$\dim \mathcal{D}  \leq \dim |\coo_Q(2,1)|-4= 1$, a contradiction.

Thus this case does not occur.

\vspace{.2cm}
\noindent \textbf{Case b: } $D$ is a plane section of $Q$, that is a divisor of type $(1,1)$. \

\vspace{.2cm}
Then the linear system $\mathcal{D}$ is, up to removing $D$, the linear system
$$
|\coo_{Q} (1,1)-\beta|
$$

Since $\mathcal{D}$ has projective dimension $2$, we see that $\beta$ consists of at most one
point. If $\beta=P$ had degree one, then $\alpha$ would have degree $7$ besides being contained
in the plane of $D$. Thus
$$h^0 (B, \coo(\alpha)) \geq h^0 (B, \coo_B(1,1)) -1= 3$$

But $\alpha \leq \delta$ and $h^0 (B, \coo_B(\delta))=3$ by assumption,
so the unique point of $\beta$ is a base point of $\delta$, contradicting
the fact that $\delta$ is base point free. Thus $\beta$ is empty. But then
$\delta=\alpha$ is a plane section, and this is also a contradiction because
$h^0 (B, \coo_B(1,1))=4$.  Thus this case also does not occur.

\vspace{.2cm}
\noindent \textbf{Case c: } $D$ has degree $3$ and arithmetic genus $0$,
 that is, it is a divisor of type $(2,1)$ or $(1,2)$. \

\vspace{.2cm}
Suppose that $D$ has type $(1,2)$.
Then the linear system $\mathcal{D}$ is, up to removing $D$, the linear system
$$
|\coo_{Q} (1,0)-\beta|
$$
Since $|\coo_{Q}(1,0)|$ has projective dimension $1$, this case does not occur.

\vspace{.2cm}
\noindent \textbf{Case d: } $D$ has degree $2$ and arithmetic genus $-1$,
 that is, it is a divisor of type $(2,0)$ or $(0,2)$. \

\vspace{.2cm}
Suppose that $D$ has type $(0,2)$. Note that $D$ is either the disjoint union of two lines
of type $(0,1)$, or a double structure on a line of type $(0,1)$.

The the linear system $\mathcal{D}$ is, up to removing $D$, the linear system
$$
|\coo_{Q} (2,0)-\beta|
$$
Since $|\coo_{Q}(2,0)|$ has projective dimension $2$ and no base points, $\beta$ is the zero
divisor. Thus $\delta=\alpha$ is contained in the fixed curve $D$. We know that at most four points of $\delta$ are collinear, because $\delta$ lies on the curve $B$ which has type $(4,4)$. Therefore the only possibility is that $D$ is the union of two lines $L_1$ and $L_2$
of type $(0,1)$, and $\delta= B.D$. Therefore $\coo_B(\delta) \cong \coo_B(0,2)$
(or $\coo_B(\delta) \cong \coo_B(2,0)$ if $D$ has type $(2,0)$).

There are no other possibilities for $D$, because it is contained properly in a divisor of type $(2,2)$. Thus we have proven that the only complete and base point free $g_8^2$'s on $B$ are $\coo_B(2,0)$ and $\coo_B(0,2)$.
\end{proof}

\section{Number of Moduli of Genus $9$ Primitive and Solvable Coverings of the Projective Line} \label{dimcount}
In this section we show that, if $C$ is a general curve in a $10$ dimensional family of smooth curves of genus $9$,
then a finite map $f: C \la \PP^1$ that has a  primitive and solvable Galois group has degree at most $4$
(the Galois group is primitive when $f$ cannot be factored nontrivially). The proof is based on a counting argument due to Zariski \cite{Z}.

Given a finite morphism $f: C \la C_{0}$ of smooth curves,
we denote by $b(q)$ the multiplicity of a branch point $q$ of $f$ in the branch divisor.

\begin{thm}[Zariski, see {\cite[Proposition 3.1]{PS}}]\label{zar}
Let $f:C\la C_{0}$ be a degree $d$ primitive solvable covering of curves. Then there exists a prime $p$ such that $d=p^k$, and
for every branch point $q$ of $f$ the multiplicity $b(q)$ is bounded by the formula:
$$b(q) \geq \frac{p^k-p^{k-1}}{2}.$$
Moreover, if $p=2$ and $d-1$ is prime, then $b(q)\geq 2^{k-1}-1$.
\end{thm}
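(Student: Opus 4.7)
The strategy is to translate the statement into pure group theory via the Galois correspondence, invoke the structure theorem for primitive solvable permutation groups, and then bound the number of cycles of an inertia generator by orbit-counting.

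First I would recall the structure theorem: a primitive solvable subgroup $G\subset S_d$ has an elementary abelian minimal normal subgroup $N$ of order $p^k$ acting regularly, so that $d=p^k$; and writing $G=N\rtimes H$ where $H$ is a point stabilizer, the action of $H$ on $N\cong\mathbb{F}_p^k$ by conjugation is faithful and irreducible. Identifying the underlying $d$-set with $N$, an element $\sigma=nh\in G$ acts on $x\in N$ by $\sigma(x)=hx+n$. On the geometric side, since the ground field has characteristic zero the inertia group at any branch point $q$ is cyclic, say generated by $\sigma$, and $b(q)=d-c(\sigma)$ where $c(\sigma)$ is the number of cycles of $\sigma$ on the $d$-set; this is just the Riemann--Hurwitz bookkeeping (orbits of $\langle\sigma\rangle$ correspond to points over $q$, with ramification indices equal to the orbit lengths).

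The main bound follows from Burnside's orbit-counting formula applied to $\langle\sigma\rangle$: if $M=\mathrm{ord}(\sigma)$ then
$$c(\sigma)=\frac{1}{M}\sum_{j=0}^{M-1}|\mathrm{Fix}(\sigma^j)|.$$
For $j=0$ the summand is $p^k$. For $1\le j\le M-1$ I would split into two subcases. If $h^j\neq 1$, then $\mathrm{Fix}(\sigma^j)$ is either empty or a translate of $\ker(h^j-1)$; faithfulness and irreducibility of $H$ on $N$ force $\ker(h^j-1)$ to be a proper subspace of $N$, hence $|\mathrm{Fix}(\sigma^j)|\le p^{k-1}$. If instead $h^j=1$, then $\sigma^j$ is a translation which must be nontrivial (else $j=M$), and hence has no fixed point. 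Combining these,
$$c(\sigma)\ \le\ \frac{p^k+(M-1)p^{k-1}}{M}\ =\ p^{k-1}+\frac{p^k-p^{k-1}}{M}\ \le\ \frac{p^k+p^{k-1}}{2}$$
for all $M\ge 2$, whence $b(q)=p^k-c(\sigma)\ge (p^k-p^{k-1})/2$ as required. The case $\sigma\in N\setminus\{0\}$ is the equality case (all cycles of length $p$).

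For the refinement when $p=2$ and $d-1=2^k-1$ is prime, the point is that any element of $\mathrm{GL}_k(\mathbb{F}_2)$ whose order divides $2^k-1$ is either trivial or has order exactly $2^k-1$ (by primality) and, in the latter case, acts on $N$ as multiplication by a generator of $\mathbb{F}_{2^k}^\times$, so has $\ker(h-1)=0$. Writing the order of $h$ as $2^a m$ with $m\mid 2^k-1$ and analyzing the Jordan decomposition of $h$ one obtains $|\mathrm{Fix}(\sigma^j)|\le 1$ for the majority of $j$, which upon plugging back into Burnside gives $c(\sigma)\le 2^{k-1}+1$ and hence $b(q)\ge 2^{k-1}-1$. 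The hard part of the whole argument is this last refinement: the bookkeeping on which powers $\sigma^j$ are translations versus non-translations (and, among the latter, on $\dim\ker(h^j-1)$) is delicate, whereas the main inequality reduces, once the Galois-Jordan structure theorem is in place, to the clean Burnside estimate above.
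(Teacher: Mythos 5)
Your proof of the main inequality is correct and follows essentially the same route as the paper: reduce to the structure theorem for primitive solvable permutation groups ($G\subset \mathrm{Aff}(N)$ with $N\cong \mathbb{F}_p^k$ regular, so $d=p^k$), observe that a non-identity affine map has at most $p^{k-1}$ fixed points, and convert a bound on fixed points into a bound on the number of cycles of the inertia generator. The only difference is cosmetic: you count orbits via Burnside over all powers $\sigma^j$, whereas the paper's (Zariski's) argument only needs the fixed points of $\sigma$ itself --- if $\sigma$ has $n$ fixed points then its remaining $d-n$ points lie in orbits of length at least $2$, so $c(\sigma)\le n+(d-n)/2$ and $b(q)=d-c(\sigma)\ge (d-n)/2\ge (p^k-p^{k-1})/2$. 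The paper's count is marginally simpler, but your version is equivalent and sound. (Minor point: for $h^j\neq 1$ to imply $\ker(h^j-1)\subsetneq N$ you only need faithfulness of the $H$-action on $N$, which holds because $N=C_G(N)$ for a primitive group with socle $N$; irreducibility is not used there.)

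The genuine gap is in the refinement for $p=2$ with $d-1=2^k-1$ prime. What is actually needed is that \emph{every} non-identity element of $G$ has at most $2$ fixed points (equivalently, $\dim\ker(h-1)\le 1$ for every non-identity $h$ in the point stabilizer $H$); this immediately gives $c(\sigma)\le 2+ (2^k-2)/2 = 2^{k-1}+1$ and hence $b(q)\ge 2^{k-1}-1$. Your sketch does not establish this. First, the premise that $\mathrm{ord}(h)=2^a m$ with $m\mid 2^k-1$ is false: already for $k=3$ the irreducible solvable group $H=7{:}3\subset \mathrm{GL}_3(\mathbb{F}_2)$ contains the Frobenius element of order $3$, and $3\nmid 7$. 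Second, the step ``analyzing the Jordan decomposition of $h$ one obtains $|\mathrm{Fix}(\sigma^j)|\le 1$ for the majority of $j$'' is an assertion, not an argument, and it is exactly where the primality hypothesis must do its work. The missing input is structural: when $2^k-1$ is prime (forcing $k$ prime), an irreducible solvable subgroup of $\mathrm{GL}_k(\mathbb{F}_2)$ is conjugate into the normalizer $\Gamma L_1(2^k)=\mathbb{F}_{2^k}^{\times}\rtimes \mathrm{Gal}(\mathbb{F}_{2^k}/\mathbb{F}_2)$ of a Singer cycle, and for a map $x\mapsto ax^{2^i}$ the fixed-point equation $x^{2^i-1}=a^{-1}$ has at most $\gcd(2^i-1,2^k-1)=2^{\gcd(i,k)}-1=1$ nonzero solutions when $0<i<k$, giving at most $2$ fixed points in all cases. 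Without some argument of this kind (this is the content of \cite[Proposition 3.1]{PS}, to which the paper defers), the last sentence of the theorem is not proved.
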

We recall Zariski's argument.
Let $S_d= Aut(\Omega )$, where $\Omega $ is a set of $d$ elements.
Fix $x \in \Omega$ and consider a primitive solvable subgroup $G$ of $S_d$. It is well known that $G$
has only one minimal normal subgroup $A$, which is an elementary abelian $p$-group for some prime $p$. Moreover, $G$ is the semidirect product
$G=[A] \cdot G_x$, where $G_x$ denotes the stabilizer of $x$ in $G$. Since the action of $A$ on $\Omega$ is regular, the cardinality of $A$ is equal to $d=p^k$ for some $k \geq 1$. Furthermore, identifying $\Omega$ with the vector space $A$,
the group $G$ acts as a subgroup of the group $\mbox{Aff}(A)$ of affinities of $A$. Hence an element $g\in G$, not equal to the identity $1_G$, has at most $p^{k-1}$ fixed points.
To each branch point $q$ the monodromy representation associates an element $g \in G$ whose action on $\Omega$ has $d-b(q)$ orbits.
Hence $\ds b(q) \geq \frac{d-n}{2}$ where $n$ is the number of fixed points of $g$.

\bp \label{rational}
Let $\mathcal C$ be an irreducible family of curves of genus $9$ whose general curve is a degree $d$ primitive and solvable covering of $\PP^1$.
Then the dimension of $\mathcal C$ is at most $9$ unless $d \leq 5$.
\ep

\begin{proof}
Consider a family $\mathcal C$ of curves $C$ of genus $9$ such that the general curve $C$ admits a degree $d$ primitive solvable covering of
 $\PP^1$ with $r$ distinct branch points, each with multiplicity at least $m$.

 By the Riemann Hurwitz
 formula  the degree of the branch divisor $B$ of the covering $f:C \ra \PP^1$ is
$16+2d$.
If  $f$ has exactly $r$ distinct branch points, each with multiplicity at least $m$, then $deg(B) \geq rm$, therefore
\begin{equation*}
r \leq \frac{16+2d}{m}
\end{equation*}
The dimension of $\mathcal C$ is then at most
$$r-3 \leq \frac{16+2d}{m} - 3.$$
By (\ref{zar}) $d=p^k$ and
$$m \geq \frac{p^k-p^{k-1}}{2}$$
Thus

$$\dim \mathcal{C} \leq  4 \, \frac{8+p^k}{p^k-p^{k-1}}-3= \frac{32}{p^{k-1}(p-1)} + \frac{4}{1-1/p}-3.$$

This shows $\dim \mathcal{C} \leq  9$ unless $d \leq 5$ or $d=8$. But when $d=8$ we can use the better estimate
$m\geq 2^{k-1}-1=3$ to conclude $\dim \mathcal{C} <  8$.
\end{proof}

    \begin{prop} \label{five}
If $C$ is a general curve in a $10$ dimensional family of smooth genus $9$ curves, then there is no degree $5$ covering $C \ra \Proj^1$ with a primitive solvable Galois group.
    \end{prop}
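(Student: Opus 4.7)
The plan is to refine the crude dimension estimate in Proposition \ref{rational}, which for $d=5$ only yields a bound of $10$ matching the dimension of our family and hence is inconclusive. First I will apply Riemann--Hurwitz: with $g(C)=9$ and $d=5$ the branch divisor has degree $26$, and Theorem \ref{zar} forces $b(q)\ge (5-1)/2 = 2$ at every branch point, so the number of branch points satisfies $r\le 13$. The whole task is then to exclude the borderline case $r=13$, since the moduli image of the corresponding Hurwitz space has dimension at most $r-3$.

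When $r=13$ every local monodromy $\sigma_i$ has $b(q)=2$ and therefore $5-2=3$ orbits on $\{1,\dots,5\}$, so its cycle type is either $(1,1,3)$ or $(1,2,2)$. By the analysis recalled after Theorem \ref{zar}, the Galois group $G$ sits inside the affine group $F_{20}=\mathrm{AGL}_1(\mathbb{F}_5)$ of order $20$; the primitive solvable transitive subgroups of $S_5$ are just $\mathbb{Z}/5$, the dihedral $D_5$ of order $10$, and $F_{20}$ itself. None of them contains an element of order $3$, so no $3$-cycle can occur, and every $\sigma_i$ must be one of the five involutions $x\mapsto -x+b$ in $F_{20}$. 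These all lie in the unique index-$2$ subgroup $D_5\subset F_{20}$, so $G\subseteq D_5$; transitivity then forces $G=D_5$, as $\mathbb{Z}/5$ has no involution.

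The final step is a parity obstruction in $D_5$: under the sign homomorphism $D_5\to \mathbb{Z}/2$ with kernel the rotation subgroup, every involution maps to the nontrivial element, so the product of $13$ involutions cannot be the identity. This contradicts the relation $\sigma_1\cdots\sigma_{13}=1$ among branch monodromies, ruling out $r=13$. Therefore $r\le 12$ and the moduli dimension of the locus of such covers is at most $9<10$, so a general curve in any $10$-dimensional family avoids it, as claimed.

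The main obstacle I foresee is not any single technical computation but the fact that Proposition \ref{rational}'s estimate is exactly tight for $d=5$: the bound $r-3\le 10$ forces one into the above group-theoretic case analysis. The crux is to notice that imposing $b(q)=2$ at every branch point collapses the possible local monodromies into $D_5$, and then to exploit the parity of $13$ under the sign map $D_5\to\mathbb{Z}/2$.
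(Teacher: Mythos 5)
Your proposal is correct and follows essentially the same route as the paper: Riemann--Hurwitz plus Zariski's bound force $r=13$ branch points each of multiplicity $2$, the local monodromies are pinned down as involutions of cycle type $(1,2,2)$ in the affine group, and an odd product of $13$ elements mapping to the nontrivial class of a $\mathbb{Z}/2$ quotient contradicts the relation $\prod g_i = 1$. The only cosmetic difference is that you rule out $3$-cycles via the order of $F_{20}$ and phrase the parity obstruction through the sign map on $D_5$, whereas the paper uses the fixed-point bound $n\le p^{k-1}$ and the quotient $G\to\mathbb{Z}_4$; these are interchangeable.
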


    \begin{proof}
Let $f: C \ra \Proj^1$ be a degree $5$ primitive and solvable covering. By Riemann-Hurwitz the branch divisor $B_f$ has degree $26$.
By (\ref{zar})  every branch point $q$ has multiplicity $b(q) \geq 2$, hence the number
$r$ of distinct branch points has to be $\leq 13.$ \\
The Hurwitz scheme of coverings of $\PP^1$ having $r$ distinct branch points 
has dimension $r-3$. Thus, if $C$ varies in a family of dimension $\geq 10$, the only possibility is that $r=13$ and $b(q)=2$ for every
branch point $q$.
Let $G\subset S_5$ be the Galois group of $f$. In the Galois group, for any
branch point $q_i \in B_f$, there is an associated cycle
$g_i\in G$ with $3=d-b(q_i)$ orbits. Furthermore by Zariski's argument the cycle $g_i$ has at most one fixed point,
so it must be $(12)(34)$ up to conjugation.
Moreover, the product of the cycles $g_i$ is the identity of the Galois group.
\begin{equation}\label{prod}
\prod_{i=1}^{13} g_i= {1}_{G}.
\end{equation}

As explained after Theorem \ref{zar}, the group $G$ is contained in the group of affinities of $\mathbb Z_5$, which is a semidirect product
$[\mathbb Z_5]\cdot\mathbb Z_4$. Therefore
there is an induced map $\phi: G \rightarrow \mathbb{Z}_4$. Since $g_i$ has order $2$, it can't be contained
in the kernel of $\phi$, so $\phi(g_i)$ is the unique element $h=[2]$ of order $2$ in $\mathbb Z_4$.
Then $\phi (\prod_{i=1}^{13}g_i)$ is also equal to $h$, contradicting (\ref{prod}).
    \end{proof}

\section{Proof of the Main Result} \label{conclusion}
In this section we collect all the previous results to show that the general curve of the family constructed in section
\ref{moduli} is algebraically, but non rationally, uniformized by radicals.

\bt \label{main}
Let $\mathcal{C}$ be the family of smooth genus $9$ curves $C$ for which there exists
a genus $2$ curve $Y$ such that $C \subset   X=Sym^2 (Y)$ and the class of $C$ in the
N\'eron-Severi group of $X$ is $3H +E$. Then a general curve in $\mathcal{C}$ is
algebraically, but non rationally, uniformized by radicals, and provides a counterexample
to Statement $S(4,2,9)$ of \cite{ah}.
\et

\begin{proof}
To see that a curve $C$ in the family is algebraically uniformized by radicals is easy.
Since $C \subset X=Sym^2(Y)$ and $C \equiv 3H+E$, for every point $p \in Y$ the curve $C$ intersects the divisor
$H_p \subset X$ in a scheme of length $4$. We define a map $\phi: Y \la Sym^4(C)$ sending a point $p \in Y$ to
$H_p \cdot C$. Then as in \cite[Proposition 5.1 ]{PS} we deduce there is a smooth curve $C'$ that covers $C$ and
admits a morphism $C' \ra Y$ of degree $\leq 4$.

We now have to prove show that a general $C$ in our family is not rationally uniformized by radicals, that is, there does not exist
a finite map $C \la \mathbb{P}^1$ with solvable Galois group. In particular, $C$, contrary to its covering $C'$,
does not admit a nonconstant map of degree $4$ or less to a curve of genus $2$ or less, and thus provides a counterexample to
$S(4,2,9)$.

Suppose by way of contradiction there is $C \la \mathbb{P}^1$ with solvable Galois group. Then we can factor it as
$$\begin{array}{ccccc}
C \;\;\stackrel{f}{\la}\;\; C_{0}\\
\;\;{\searrow}\;\;\;  \swarrow g &  \\
\Proj^1
\end{array}$$
where $f$ is a covering of degree $d \geq 2$ with a  primitive and solvable Galois group (the Galois group is primitive when $f$ cannot be factored).
Since $C$ does not cover any non rational curve by Proposition \ref{maps}, the curve
 $C_{0}$ is rational, and we are reduced to show   there does not exist
a finite map $C \la \mathbb{P}^1$ with primitive and solvable Galois group.
By the dimension count of Propositions \ref{rational} and \ref{five} any such map would have degree $d \leq 4$. But by Theorem \ref{g14}
there are no morphisms $C \ra \PP^1$ of degree $d \leq 4$. Thus the proof is complete.

\end{proof}

    \addcontentsline{toc}{section}{Bibliografia}

\end{document}